\apptocmd{\sloppy}{\hbadness 10000\relax}{}{}
\title{On Grauert--Riemenschneider vanishing for Cohen--Macaulay schemes of klt type}
\author[J. Baudin]{Jefferson Baudin}
  \address{\'Ecole Polytechnique F\'ed\'erale de Lausanne, Chair of Algebraic Geometry \newline 
    \indent MA C3 575 (Bâtiment MA), Station 8, CH-1015 Lausanne}
  \email{jefferson.baudin@epfl.ch}
\author[T. Kawakami]{Tatsuro Kawakami}
\address{Graduate School of Mathematical Sciences, University of Tokyo, 3-8-1 Komaba, \newline
\indent Meguro-ku, Tokyo 153-8914, Japan}
\email{tatsurokawakami0@gmail.com}
 \author[L. Rösler]{Linus Rösler}
  \address{\'Ecole Polytechnique F\'ed\'erale de Lausanne, Chair of Algebraic Geometry \newline 
    \indent MA C3 615 (Bâtiment MA), Station 8, CH-1015 Lausanne}
  \email{linus.rosler@epfl.ch}
\def\phi{\varphi}
\def\epsilon{\varepsilon}
\def\Spec{\operatorname{Spec}}
\def\Supp{\operatorname{Supp}}
\def\codim{\operatorname{codim}}
\def\Pic{\operatorname{Pic}}
\def\Exc{\operatorname{Exc}}
\def\max{\operatorname{max}}
\def\m{{\mathfrak m}}
\newcommand{\Q}{\mathbb{Q}} 
\newcommand{\bD}{\mathbb{D}} 
\newcommand{\Z}{\mathbb{Z}}
\newcommand{\sO}{\mathcal{O}}
\newcommand{\cO}{\mathcal{O}}
\newcommand{\sHom}{\mathcal{H}\! \mathit{om}}
\newcommand{\sH}{\mathcal{H}}
\newcommand{\cH}{\mathcal{H}}
\newcommand{\JB}[1]{\textcolor{blue}{(JB: #1)}}
\newsavebox{\pullback}
\sbox\pullback{%
\begin{tikzpicture}%
\draw (0,0) -- (1ex,0ex);%
\draw (1ex,0ex) -- (1ex,1ex);%
\end{tikzpicture}}
\newsavebox{\pullbackdl}
\sbox\pullbackdl{%
\begin{tikzpicture}%
\draw (-1ex,0ex) -- (0ex,0ex);%
\draw (0ex,-1ex) -- (0ex,0ex);%
\end{tikzpicture}}
\newsavebox{\pushoutdr}
\sbox\pushoutdr{%
\begin{tikzpicture}%
\draw (-1ex,-1ex) -- (-1ex,0ex);%
\draw (-1ex,0ex) -- (0ex,0ex);%
\end{tikzpicture}}
\newcommand{\stacksproj}[1]{{\cite[Tag~\href{http://stacks.math.columbia.edu/tag/#1}{#1}]{stacks-project}}}
\newcommand{\stacksprojs}[2]{\cite[Tags ~\href{http://stacks.math.columbia.edu/tag/#1}{#1} and ~\href{http://stacks.math.columbia.edu/tag/#2}{#2}]{stacks-project}}
\numberwithin{equation}{thm}
\keywords{vanishing theorems; singularities; positive characteristic}
\subjclass[2020]{14F17, 14B05, 13A35}
\begin{document}
\tolerance = 9999

\begin{abstract}
Given a Cohen--Macaulay scheme of klt type $X$ and a resolution $\pi\colon Y\to X$, we show that $R^1\pi_{*}\omega_Y=0$. We deduce that if $\dim(X) = 3$, then $X$ satisfies Grauert--Riemenschneider vanishing and therefore has rational singularities. We also obtain that in arbitrary dimension, if $X$ is of finite type over a perfect field of characteristic $p > 0$, then $X$ has $\Q_p$--rational singularities.
\end{abstract}

\maketitle
\markboth{}{}

\section{Introduction}

In this paper, we study \emph{Grauert--Riemenschneider vanishing}, a relative version of Kodaira vanishing, for
Noetherian excellent integral schemes.

\begin{defn}
Let $X$ be a Noetherian excellent integral scheme of finite dimension with a dualizing complex. We say that $X$ satisfies \emph{Grauert--Riemenschneider vanishing} if for every resolution $\pi \colon Y \to X$, we have $R^j \pi_* \omega_Y = 0$ for all $j \geq 1$.
\end{defn}

Grauert--Riemenschneider vanishing is a fundamental result in characteristic zero birational geometry. It is for example used to show that klt singularities are rational \cite{Elkik_Rationalite_des_singularites_canoniques, Kovacs_Characterisation_of_rational_singularities}, or that rational singularities are stable under deformations \cite{Elkik_singularites_rationnelles_et_deformations}. 

However, this vanishing is known to fail in every positive characteristic. Such examples can be constructed by taking affine cones over smooth projective surfaces that fail to satisfy Kodaira vanishing \cite{Raynaud_Contre-exemple_au_vanishing_theorem_en_caracteristique_p>0,Hacon_Kovacs_Generic_vanishing_fails_for_singular_varieties_and_in_characteristic_p>0}, or by taking wild $\mathbb{Z}/p\mathbb{Z}$--quotients \cite{Totaro_The_failure_of_KV_and_terminal_singularities_that_are_not_CM,Totaro_Terminal_3folds_that_are_not_CM,BBK}.

On the other hand, Grauert--Riemenschneider vanishing is sometimes known to hold when the singularities are mild. For instance, it has been shown that three--dimensional klt singularities in characteristic  $p > 5$ satisfy the vanishing theorem \cite{Hacon-Witaszek,Bernasconi-Kollar}. Moreover, this bound on the characteristic is optimal, as counterexamples are known in characteristics $2$, $3$, and $5$ \cite{Ber,CT19-2,ABL}. Notably, all known counterexamples fail to be Cohen--Macaulay, which naturally raises the question: Do \emph{Cohen--Macaulay} klt singularities satisfy Grauert--Riemenschneider vanishing?

This question is particularly relevant for applications, as \emph{strongly $F$--regular} singularities---regarded as characteristic $p$ analogues of klt singularities \cite{Takagi,SS10}---are known to be Cohen--Macaulay.

In this paper, we prove that Cohen--Macaulay klt singularities satisfy Grauert--Riemenschneider vanishing in degree one. 

\begin{thm}\label{introthm:general dim}
   Let $X$ be a Noetherian excellent normal scheme of klt type, and let $\pi\colon Y\to X$ be a resolution. Then the following hold:
   \begin{enumerate}[label = \textup{(\arabic*)}]
       \item\label{itm:main_thm:first} If $X$ is of klt type, then $R^{d-1}\pi_{*}\sO_Y=0$.
       \item\label{itm:main_thm:second} If $X$ is of klt type and Cohen--Macaulay, then $R^{1}\pi_{*}\omega_Y=0$.
   \end{enumerate} 
    \end{thm}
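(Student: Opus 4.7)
The plan is to show that part (2) follows from part (1) via Grothendieck duality and a codimension bound on the higher direct images; the genuine obstacle is part (1) itself.

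Deducing part (2) from part (1). The question is local, so I may assume $X = \Spec R$ with $(R,\m)$ a complete local Cohen--Macaulay ring of klt type and dimension $d$. Since $X$ is Cohen--Macaulay, $\omega_X^\bullet = \omega_X[d]$, and Grothendieck duality gives
\[
R\pi_*\omega_Y \cong R\sHom_R(R\pi_*\sO_Y,\omega_R).
\]
Local duality identifies $R^1\pi_*\omega_Y$ at the closed point with the Matlis dual of $H^{d-1}_\m(R\pi_*\sO_Y)$, so it suffices to show this top local cohomology vanishes. Analyzing it through the Grothendieck spectral sequence
\[
E_2^{a,b} = H^a_\m(R^b\pi_*\sO_Y) \Longrightarrow H^{a+b}_\m(R\pi_*\sO_Y),
\]
I only need the antidiagonal $a+b = d-1$ to vanish. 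The corner $E_2^{d-1,0} = H^{d-1}_\m(\sO_X)$ vanishes by Cohen--Macaulayness of $R$.

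The remaining terms follow from the key consequence of part (1): $\codim_X \supp R^b\pi_*\sO_Y \geq b+2$ for every $b \geq 1$. To prove this, take any $\eta \in X$ with $c := \dim \sO_{X,\eta} \leq b+1$. The morphism $\pi_\eta \colon Y_\eta \to \Spec \sO_{X,\eta}$ is proper birational and $Y_\eta$ has dimension $c$, so every closed fiber of $\pi_\eta$ has dimension $\leq c-1$; hence $R^b\pi_{\eta*}\sO_{Y_\eta} = 0$ whenever $c \leq b$ by Grothendieck vanishing. In the borderline case $c = b+1$, the local ring $\sO_{X,\eta}$ inherits klt type, so part (1) applied to this $(b+1)$-dimensional local ring gives $R^b\pi_{\eta*}\sO_{Y_\eta} = R^{c-1}\pi_{\eta*}\sO_{Y_\eta} = 0$. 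Flat base change then shows $(R^b\pi_*\sO_Y)_\eta = 0$, establishing the codimension bound. Consequently $\dim R^b\pi_*\sO_Y \leq d-b-2$, and so $H^{d-1-b}_\m(R^b\pi_*\sO_Y) = 0$ for every $b \geq 1$: every antidiagonal term vanishes and part (2) follows.

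Part (1): the main obstacle. In characteristic zero this is classical and in fact all higher direct images vanish (Kempf--Elkik). In positive characteristic Kodaira-type vanishing is unavailable, so one must proceed differently. I would take a log resolution of a boundary $(X,\Delta)$ witnessing klt type, write $K_Y + \pi^{-1}_*\Delta - \pi^*(K_X+\Delta) = \sum a_i E_i$ with exceptional discrepancies $a_i > -1$, and try to deduce $R^{d-1}\pi_*\sO_Y = 0$ by applying Serre duality on $Y$ to a line bundle built from $\lceil -\sum a_i E_i \rceil$. The delicate step is obtaining top-degree vanishing in positive characteristic; this likely requires a trace-morphism argument or Witt-vector cohomology input, consistent with the paper's conclusion that $X$ has $\Q_p$-rational singularities. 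This is the genuine technical obstacle; once part (1) is in hand, the duality argument above yields part (2) cleanly.
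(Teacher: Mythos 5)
Your reduction of (2) to (1) is essentially sound and, in substance, is the same mechanism the paper uses: the paper's Lemma~\ref{lem:dual} proves exactly the statement you need, namely that the codimension bound $\codim \Supp R^b\pi_*\sO_Y \geq b+2$ for $b \geq 1$ (which the paper's Theorem~\ref{thm:weak rational in general dim} records, and which it likewise deduces from the top-degree vanishing by localizing and induction on dimension, just as you do) forces $R^1\pi_*\omega_Y$ to inject into $\sH^{-(d-1)}(\omega_X^{\bullet})$, hence to vanish when $X$ is Cohen--Macaulay. The paper runs the duality through the triangle $\sO_X \to R\pi_*\sO_Y \to \tau_{\geq 1}R\pi_*\sO_Y$ and $\mathcal{RH}om(-,\omega_X^{\bullet})$, while you use local duality and the spectral sequence $H^a_{\m}(R^b\pi_*\sO_Y) \Rightarrow H^{a+b}_{\m}(R\pi_*\sO_Y)$; these are equivalent bookkeeping devices. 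One small caveat: you assert at the outset that the completed local ring is again of klt type, which is not obvious and also not needed --- prove the codimension bound on $X$ itself (where localization does preserve klt type), then pass to the completion only for the local duality step, using flat base change and the fact that completion preserves dimensions of supports.

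The genuine gap is part (1), which is the heart of the theorem and which you explicitly leave unproved. Your sketch also points in the wrong direction: no log resolution, no trace-morphism or Witt-vector input, and no characteristic-$p$ vanishing technology is needed, and indeed the paper's argument is characteristic-free and works for any resolution (not a log resolution). The actual proof writes $\pi$ as the blow-up of an ideal, so that there is an effective divisor $E = \pi^{-1}(Z)$ with $\sO_Y(-E)$ $\pi$-ample, and gets $R^{d-1}\pi_*\sO_Y(-nE)=0$ for $n \gg 0$ by relative Serre vanishing. The non-$\pi$-exceptional part of $nE$ is discarded using the fact that the relevant fibers have dimension $\leq d-2$, leaving $R^{d-1}\pi_*\sO_Y(-\sum_i n_iF_i)=0$ with the $F_i$ exceptional. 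One then removes the multiplicities $n_i$ one unit at a time: the cokernel in each step is $\sO_{F_j}(F_j - \sum_i n_iF_i)$, and when $F_j$ contracts to a point, Serre duality on the $(d-1)$-dimensional projective variety $F_j$ reduces the required vanishing to $H^0\bigl(F_j, \sO_{F_j}(K_Y + \sum_i n_iF_i)\bigr) = 0$. The klt hypothesis enters only here: writing $K_Y + \sum_i a_iF_i + \pi_*^{-1}\Delta \sim_{\Q} \pi^*(K_X+\Delta)$ with all $a_i < 1$, one chooses $j$ maximizing $(n_i - a_i)/r_i$ (where $nE = \sum_i r_iF_i + G$), and this choice makes $-(K_Y + \sum_i n_iF_i)|_{F_j}$ $\pi|_{F_j}$-big because $-nE$ is $\pi$-ample, killing the $H^0$. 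Your proposed route via Serre duality on $Y$ applied to a round-up of the discrepancy divisor does not by itself produce any top-degree vanishing in positive characteristic, which is precisely why you flag it as an obstacle; the blow-up/ampleness trick is the missing idea.
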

\begin{rem}
\begin{enumerate}
    \item It is well--known that $R^{d-1}\pi_{*}\omega_Y=0$ always holds (see \autoref{prop:GR in general dim}).
    \item By \cite[Theorem 1.3]{Ishii-Yoshida}, we also have $R^{1}\pi_{*}\sO_Y=0$ in the situation of \autoref{introthm:general dim}.\ref{itm:main_thm:second} (see \cite[Theorem 1.1]{Ishii-Yoshida} for a more general statement).
    \item We cannot drop the assumption of Cohen--Macaulayness in \autoref{introthm:general dim}.\ref{itm:main_thm:second} \cite{Ber,CT19-2,ABL}.
\end{enumerate}
\end{rem}

As an immediate corollary, we obtain the following:

\begin{COR}\label{introcor:three dim}
    Let $X$ be a Noetherian excellent normal scheme of dimension 3. If $X$ is of klt type and Cohen--Macaulay, then $X$ satisfies Grauert--Riemenschneider vanishing and has rational singularities.
\end{COR}

In particular, a strongly $F$--regular (or even quasi--$F$--regular, see \cite{Tanaka_Witaszek_Yobouko_quasi_Fe_splittings_and_quasi_F_regularity}) threefold in positive characteristic satisfies Grauert--Riemenschneider vanishing and has rational singularities (see \autoref{thm:GR for SFR}). Similarly, we also obtain such a result in the globally $+$--regular setting (see \autoref{thm:vanishing_+_regular}).\medskip

As we already pointed out, klt singularities need not be rational in positive characteristic. Nevertheless, it is expected that they satisfy a weak notion: Witt--rationality \cite{Rulling_Chatzistamatiou_Hodge_Witt_and_Witt_rational,Blickle_Esnault_Rational_singularities_and_rational_points}. Briefly, a normal variety $X$ admitting a resolution $\pi \colon Y \to X$ is said to have Witt--rational singularities if, for all $i> 0$, the sheaves $R^i\pi_*W\cO_Y$ are annihilated by some fixed $p$--power.

The fact that klt singularities are Witt--rational is known to be true in dimension 3 \cite{GNT16, Hacon_Witaszek_On_the_relative_MMP_for_threefolds_in_low_char}, and in dimension 4 if one assumes the existence of log resolutions for all birational models and that $p > 5$ \cite{Hacon_Witaszek_On_the_relative_MMP_for_4folds_in_pos_and_mixed_char}. However, this question is widely open in general. Here, we present a version of this statement that holds in any dimension:

\begin{thm}\label{intro:cor_Qp_rationality}
    Let $X$ be a Cohen--Macaulay integral scheme of klt type which is of finite type over a perfect field of characteristic $p > 0$. Assume that $X$ admits a resolution of singularities. Then $X$ has $\Q_p$--rational singularities.

    If in addition $X$ is projective and has isolated singularities, then it has Witt--rational singularities. 
\end{thm}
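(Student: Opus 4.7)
Let $\pi \colon Y \to X$ be a resolution of singularities, set $d = \dim X$, and write $W = W(k)$ for the Witt vectors of the perfect base field.

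The $\Q_p$--rationality amounts to showing $R^i\pi_* W\sO_{Y,\Q} = 0$ for every $i > 0$. The starting point is \autoref{introthm:general dim}\ref{itm:main_thm:second}, which gives $R^1\pi_* \omega_Y = 0$. Using the Verschiebung short exact sequences
\[
0 \to W_{n-1}\omega_Y \xrightarrow{V} W_n\omega_Y \to \omega_Y \to 0
\]
(or an appropriate dualizing Witt--complex variant) together with induction on $n$, I would first promote this to $R^1\pi_* W\omega_Y = 0$ in the limit. The next step is to translate a vanishing for $W\omega_Y$ into a vanishing for $W\sO_Y$ via relative Grothendieck--Witt duality (as developed by Ekedahl and Chatzistamatiou--Rülling); the Cohen--Macaulay hypothesis on $X$ is essential here so that the Witt dualizing complex $W\omega_X^\bullet$ remains tractable. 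Combined with the companion vanishings $R^{d-1}\pi_* \sO_Y = 0$ from \autoref{introthm:general dim}\ref{itm:main_thm:first}, the always--valid $R^{d-1}\pi_* \omega_Y = 0$, and $R^1\pi_* \sO_Y = 0$ from \cite{Ishii-Yoshida}, a careful dévissage should yield $R^i\pi_* W\sO_{Y,\Q} = 0$ in all positive degrees.

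For the Witt--rational statement under the extra hypotheses of projectivity and isolated singularities, the plan is to upgrade the rational vanishings to uniform $p$--power torsion via a finiteness argument. Each $R^i\pi_* W\sO_Y$ with $i > 0$ is supported on the finite singular locus, and standard finiteness for Witt vector cohomology of projective varieties (combined with the projectivity of $X$) shows that the relevant sections form a finitely generated $W$--module. A finitely generated $W$--module that is $p^\infty$--torsion is automatically annihilated by a uniform $p^N$, yielding the uniform torsion required for Witt--rationality.

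The main obstacle will be the duality step in the $\Q_p$--rationality argument: extracting vanishings of $R^i\pi_* W\sO_{Y,\Q}$ in \emph{every} positive degree from a single vanishing of $R^1\pi_* W\omega_Y$ is subtle when $X$ is only Cohen--Macaulay rather than Gorenstein, since then $W\omega_X^\bullet$ has nontrivial cohomology in several degrees. Running the dévissage at the level of $W_n$ together with its dualizing complex, while tracking $p$--torsion along each inductive step, is where I expect the genuine technical difficulty to lie.
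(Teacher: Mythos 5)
There is a genuine gap, and it is exactly where you predicted the difficulty but misdiagnosed its nature. $\Q_p$--rationality requires $R^i\pi_*W\cO_{Y,\Q}=0$ for \emph{every} $0<i\leq d-1$, which by Witt--Grothendieck duality and $\Q_p$--Cohen--Macaulayness amounts to $R^i\pi_*W\omega_{Y,\Q}=0$ for all $i>0$ together with $\pi_*W\omega_{Y,\Q}=W\omega_{X,\Q}$. Your inputs ($R^1\pi_*\omega_Y=0$, $R^1\pi_*\cO_Y=0$, and the two degree--$(d-1)$ vanishings) control only the extreme degrees, so no dévissage, however careful, can produce the vanishing in the intermediate degrees $2\leq i\leq d-2$ when $d\geq 4$; indeed the present paper proves full coherent Grauert--Riemenschneider vanishing only in dimension $3$, and nothing in it bounds $R^i\pi_*\omega_Y$ for $1<i<d-1$. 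The missing ingredient is the \emph{rational Witt Grauert--Riemenschneider vanishing theorem} of \cite{Baudin_Witt_GR_vanishing_and_applications} (Theorem A / Theorem 5.1.4), which gives $R^i\pi_*W\omega_{Y,\Q}=0$ for all $i>0$ for an arbitrary resolution, for reasons having nothing to do with klt or Cohen--Macaulay hypotheses (it is a statement that only holds after inverting $p$). The paper imports this as a black box; its own contribution is the identification $\pi_*W_n\omega_Y=W_n\omega_X$ for all $n$ (\autoref{klt_gives_weak_Witt_psrat}), proved by dualizing the Verschiebung--restriction triangle for $W_{n+1}\cO$ and chasing the resulting diagram, using only the injection $R^1\pi_*\omega_Y\hookrightarrow \cH^{-(d-1)}(\omega_X^{\bullet})$ from \autoref{lem:dual} (note this part does not even use Cohen--Macaulayness; that hypothesis enters only through the cited theorem). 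So your first step (getting $R^1\pi_*W_n\omega_Y=0$ inductively, modulo fixing the direction of the exact sequence, which is $0\to\omega_Y\to W_{n+1}\omega_Y\to F_*W_n\omega_Y\to 0$ rather than the one you wrote) is fine but insufficient, and the heart of the theorem cannot be recovered from the coherent vanishings established in this paper.

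The Witt--rationality step also has a concrete flaw: you assert that the relevant $R^i\pi_*W\cO_Y$, being supported on finitely many points, form finitely generated $W(k)$--modules by ``standard finiteness for Witt vector cohomology of projective varieties.'' No such finiteness holds: Witt vector cohomology of smooth projective varieties is in general not finitely generated over $W(k)$ (its $V$--torsion can be infinitely generated, e.g. $H^2(X,W\cO_X)$ for a supersingular K3 surface), so ``$p^{\infty}$--torsion plus finitely generated'' is not available and uniform annihilation by $p^N$ does not follow this way. This is precisely why the integral (Witt--rational) statement in the paper carries the extra hypotheses of projectivity and isolated singularities: they are what Theorem A of \cite{Baudin_Witt_GR_vanishing_and_applications} needs to control torsion, by arguments more delicate than a finiteness-over-$W$ reduction.
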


\begin{rem}\label{rem_below}
\begin{itemize}
    \item The notion of $\Q_p$--rationality was defined in \cite{PZ} and is a mild weakening of Witt--rationality. It seems that in practice, knowing $\Q_p$--vanishing instead of the full Witt--vanishing is enough for many purposes. Nevertheless, we hope to eventually be able to strengthen \cite[Theorem A]{Baudin_Witt_GR_vanishing_and_applications} to obtain Witt--rationality above without assuming isolated singularities. 
    \item  As the proof shows, one can significantly weaken the Cohen--Macaulay assumption in \autoref{intro:cor_Qp_rationality}. Namely, it is enough to assume $\mathbb{Q}_p$--Cohen--Macaulayness for the first statement and Witt--Cohen--Macaulayness for the second one (see \cite[Definition 5.1.3]{Baudin_Witt_GR_vanishing_and_applications}). For example, these notions are invariant under universal homeomorphisms and arbitrary finite quotients, unlike the usual Cohen--Macaulayness \cite{Forgarty_On_the_depth_of_local_rings_of_invariants_of_cyclic_groups}. We hope to be eventually able to show that only the klt type assumption and the existence of one resolution is enough. 
\end{itemize}
\end{rem}





\section{Preliminaries}

\subsection{Notation and terminology}
Throughout, a \emph{variety} denotes an integral, excellent, Noetherian scheme that admits a dualizing complex. A \emph{pair} $(X, \Delta)$ consists of a normal variety $X$ together with an effective $\Q$--divisor $\Delta$ on $X$.

All dualizing complexes are normalized in the sense of \cite{Hartshorne_Residues_and_duality}. That is, if $X$ is a variety of dimension $d$ with a dualizing complex $\omega_X^{\bullet}$, then $\cH^{i}(\omega_X^{\bullet}) = 0$ for all $i < -d$, and $\omega_X \coloneqq \cH^{-d}(\omega_X^{\bullet}) \neq 0$ (given a complex $\mathcal{A}^{\bullet}$ in some derived category and $i \in \Z$, we let $\cH^i(\mathcal{A}^{\bullet})$ denotes its $i$--th cohomology object). 

If we fix a variety $X$ with a dualizing complex $\omega_X^{\bullet}$ as above and $\pi \colon Y \to X$ is a separated morphism of finite type, then we naturally induce a dualizing complex on $Y$ by taking $\omega_Y^{\bullet} \coloneqq \pi^!\omega_X^{\bullet}$ (see \stacksproj{0AA3}).

A resolution of a variety $X$ is a projective birational morphism $\pi \colon Y \to X$ with $Y$ regular.

\begin{defn}
    We say a variety $X$ has \emph{rational singularities} if it is Cohen--Macaulay, and for any resolution $\pi \colon Y \to X$, the natural map $\cO_X \to R\pi_*\cO_Y$ is an isomorphism. 
\end{defn}
Note that by Grothendieck duality, a variety with rational singularities automatically satisfies Grauert--Riemenschneider vanishing. Thanks to \cite{CR15}, one only needs to check rationality for one resolution, if one assumes resolutions of singularities. In positive characteristic, it is not needed to assume the existence of resolutions by \cite{Chatzistamatiou_Rulling_Higher_direct_images_of_the_structure_sheaf_in_positive_characteristic}.

\begin{defn}
    We say that a variety $X$ is \textit{of klt type} if it is normal and there exists an effective $\Q$--divisor $\Delta$ such that the pair $(X,\Delta)$ is klt (see \cite[Definition 2.28]{BLP+}).
\end{defn}

\section{Proofs of the main theorems}

\subsection{Rationality results}

In this section, we prove \autoref{introthm:general dim} and \autoref{introcor:three dim}.

\begin{prop}\label{prop:GR in general dim}
Let $X$ be a variety of dimension $d$, and let $\pi \colon Y \to X$ be a resolution. Then $R^{d-1}\pi_{*}\omega_Y=0$ for every resolution $\pi\colon Y\to X$.
\end{prop}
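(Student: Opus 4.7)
The plan is to apply Grothendieck duality and reduce the statement to a local cohomology computation on $Y$. Since $Y$ is regular, $\omega_Y^\bullet = \omega_Y[d]$, and Grothendieck duality for the proper morphism $\pi$ gives
\[
R\pi_*\omega_Y[d] \simeq R\sHom_{\sO_X}(R\pi_*\sO_Y, \omega_X^\bullet),
\]
hence $R^{d-1}\pi_*\omega_Y \simeq \sExt^{-1}_{\sO_X}(R\pi_*\sO_Y, \omega_X^\bullet)$. The question is local on $X$, so I would reduce to a complete local ring at a closed point. After replacing $X$ by its normalization (a finite birational modification that does not affect $\pi$), we may assume $X$ is normal, so that $\pi_*\sO_Y = \sO_X$ by Zariski's main theorem, giving a distinguished triangle $\sO_X \to R\pi_*\sO_Y \to \tau^{\ge 1}R\pi_*\sO_Y \xrightarrow{+1}$.

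Working at a closed point $x$ with $R = \widehat{\sO_{X,x}}$ and $E = \pi^{-1}(x)$, Grothendieck--Matlis duality identifies $H^{d-1}(Y,\omega_Y)^\vee \cong H^1_E(Y,\sO_Y)$, so it suffices to show $H^1_E(Y,\sO_Y) = 0$. Applying $R\Gamma_\m$ to the triangle above and using Serre's $(S_2)$--criterion for the normal ring $R$ (which gives $H^1_\m(R) = 0$ for $d \ge 2$; the cases $d \le 1$ are trivial) produces the identification
\[
H^1_E(Y,\sO_Y) \hookrightarrow H^0_\m(R^1\pi_*\sO_Y),
\]
where the right-hand side arises from a spectral sequence computation of $H^1_\m(\tau^{\ge 1}R\pi_*\sO_Y)$ using that each $R^j\pi_*\sO_Y$ for $j \ge 1$ is supported in codimension $\ge 2$ (again by Zariski's main theorem applied to the normal $X$).

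The main obstacle is to show the right-hand side vanishes, i.e., that $R^1\pi_*\sO_Y$ has no associated primes at closed points of codimension $\ge 2$. This is a torsion-freeness statement about the first higher direct image under a resolution, and is the heart of the argument: it amounts to the absence of embedded components in $R^1\pi_*\sO_Y$, a depth assertion whose proof can be carried out directly via a codimension-of-support analysis in the spectral sequence $E_2^{p,q} = \sExt^p(R^{-q}\pi_*\sO_Y, \omega_X^\bullet)$ for $R\sHom(R\pi_*\sO_Y, \omega_X^\bullet)$, or via a torsion-freeness statement for higher direct images from a regular scheme. Once established, the chain of identifications above yields $R^{d-1}\pi_*\omega_Y = 0$ as claimed.
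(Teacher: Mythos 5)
Your reductions up to the displayed injection are correct: formal duality over the complete local ring gives $H^{d-1}(Y,\omega_Y)^\vee\cong H^1_E(Y,\sO_Y)$, and combining $R\Gamma_{\m}(X,-)$ applied to the triangle $\sO_X\to R\pi_*\sO_Y\to \tau^{\geq 1}R\pi_*\sO_Y$ with $H^1_{\m}(\sO_X)=0$ (Serre's $S_2$) does yield $H^1_E(Y,\sO_Y)\hookrightarrow H^0_{\m}(R^1\pi_*\sO_Y)$. The gap is the final step, which you defer as ``the heart of the argument'': the statement $H^0_{\m}(R^1\pi_*\sO_Y)=0$ is simply false in general, so no codimension-of-support or torsion-freeness argument can establish it. Already for the cone over an elliptic curve ($d=2$) or over an abelian surface ($d=3$), $R^1\pi_*\sO_Y$ is a nonzero sheaf entirely supported at the vertex, so $H^0_{\m}(R^1\pi_*\sO_Y)=R^1\pi_*\sO_Y\neq 0$, while the proposition (and, in characteristic zero, full Grauert--Riemenschneider) still holds. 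Note also that $R^1\pi_*\sO_Y$ is a torsion sheaf for any birational $\pi$, so ``torsion-freeness'' is not the relevant property; Koll\'ar-type torsion-freeness concerns $R^i\pi_*\omega_Y$, not $R^i\pi_*\sO_Y$, and is a characteristic-zero statement in any case. What your argument actually requires is that the connecting map $H^0_{\m}(R^1\pi_*\sO_Y)\to H^2_{\m}(\sO_X)$ be injective (equivalently $H^1_E(Y,\sO_Y)=0$), which is just a dual restatement of the proposition itself; so the difficulty has been relocated, not resolved.

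For comparison, the paper's proof avoids duality altogether: after reducing to $X$ affine, it cuts by a general smooth relative hyperplane section $H$, kills $R^{d-1}\pi_*\omega_Y(H)$ by relative Serre vanishing, and uses the sequence $0\to\omega_Y\to\omega_Y(H)\to\omega_H\to 0$ to reduce the top-degree vanishing to the same statement one dimension lower, terminating with Grauert--Riemenschneider for surfaces (\cite[Theorem 10.4]{Kol13}), which is valid in all characteristics. If you want to salvage a duality-based proof, you would in effect have to reprove that surface/top-degree input rather than appeal to a nonexistent vanishing of $H^0_{\m}(R^1\pi_*\sO_Y)$.
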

\begin{proof}
 We may assume that $X=\Spec R$ is affine, and let $\pi \colon Y \to X$ be a resolution. Since $Y$ is projective over $R$, we can find a general hyperplane section $H$ such that $H$ is smooth by \cite[Theorem 2.17]{BLP+} and $R^{d-1}\pi_{*}\omega_X(H)=0$ by relative Serre vanishing.
 Consider the short exact sequence
 \[
 0\to \sO_X(-H) \to \sO_X \to\sO_H \to 0.
 \]
 Taking $\sHom_{\sO_X}(-,\omega_X)$, we have a short exact sequence
 \[
 0\to \omega_X \to \omega_X(H) \to \omega_H \to 0.
 \]
 Since $R^{d-1}\pi_{*}\omega_X(H)=0$, the desired vanishing $R^{d-1}\pi_{*}\omega_X=0$ can be reduced to the vanishing
 $R^{d-2}\pi_{*}\omega_H=0$.
 By repeating this argument, we can reduced to the case $\dim X=2$, which follows from \cite[Theorem 10.4]{Kol13}.
\end{proof}

\begin{thm}\label{thm:weak rational in general dim}
Let $X$ be a variety of klt type, and let $\pi\colon Y\to X$ be a resolution. Then for all $i > 0$, \[\codim R^i\pi_*\sO_Y > i + 1.\] In particular, $R^{d-1}\pi_{*}\sO_Y=0$ where $d = \dim(X)$.
\end{thm}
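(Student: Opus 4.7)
The plan is to induct on $d=\dim X$, first reducing the codimension statement to the ``in particular'' vanishing $R^{d-1}\pi_*\sO_Y=0$ in every dimension via localisation, and then proving the latter by cutting with a general hyperplane through a closed point of the support and applying Nakayama's lemma.

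For the reduction, if $\eta\in\Supp R^i\pi_*\sO_Y$ has codimension $\le i+1$ in $X$, then localising at $\eta$ yields a klt-type local ring of dimension $e\le i+1$ together with a resolution obtained by base change; since klt descends to localisations, $Y$ stays regular, and higher direct images commute with flat base change, we have $(R^i\pi_*\sO_Y)_\eta=R^i(\pi_\eta)_*\sO_{Y_\eta}$, and the ``in particular'' at dimension $e\le i+1$ would force this to vanish, contradicting $\eta\in\Supp$. So it suffices to prove $R^{d-1}\pi_*\sO_Y=0$ for every klt-type variety of dimension $d\ge 2$.

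I would induct on $d$. The base $d=2$ is the classical fact that two-dimensional klt singularities are rational in every characteristic. For $d\ge 3$, assume the result in dimension $d-1$ and suppose for contradiction that $\Supp R^{d-1}\pi_*\sO_Y$ contains a closed point $x$. Shrinking $X$ to an affine neighbourhood of $x$ and invoking \cite[Theorem 2.17]{BLP+} applied to the linear sub-system of sections vanishing at $x$, one obtains a general hypersurface $H\ni x$ such that $(H,\Delta|_H)$ is klt and $H_Y:=\pi^{-1}(H)$ is regular. Then $\pi|_{H_Y}\colon H_Y\to H$ is a resolution of a klt-type variety of dimension $d-1$, so by induction $R^{d-2}\pi_*\sO_{H_Y}=0$; and $R^{d-1}\pi_*\sO_{H_Y}=0$ for dimension reasons (the fibres of $\pi|_{H_Y}$ have dimension at most $d-2$).

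The short exact sequence $0\to\sO_Y(-H_Y)\to\sO_Y\to\sO_{H_Y}\to 0$ then produces, via these two vanishings, an isomorphism $R^{d-1}\pi_*\sO_Y(-H_Y)\xrightarrow{\sim} R^{d-1}\pi_*\sO_Y$; by the projection formula this identifies the natural multiplication map
\[
\sO_X(-H)\otimes R^{d-1}\pi_*\sO_Y\longrightarrow R^{d-1}\pi_*\sO_Y
\]
with an isomorphism. Since $\sO_X(-H)\subseteq\m_x$, one obtains $(R^{d-1}\pi_*\sO_Y)_x=\m_x\,(R^{d-1}\pi_*\sO_Y)_x$, and Nakayama's lemma yields $(R^{d-1}\pi_*\sO_Y)_x=0$, contradicting $x\in\Supp$. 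The main obstacle is securing the Bertini-type input \cite[Theorem 2.17]{BLP+} in positive or mixed characteristic while simultaneously forcing $H$ through $x$, keeping $(H,\Delta|_H)$ klt, and making $H_Y$ regular; this is essentially the same tool that powers the preceding proposition.
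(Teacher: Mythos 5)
Your reduction to the single vanishing $R^{d-1}\pi_*\sO_Y=0$ via localisation matches the paper, but the hyperplane-cut argument for that vanishing has a fatal flaw exactly where it matters. By the theorem on formal functions, a closed point $x\in\Supp R^{d-1}\pi_*\sO_Y$ necessarily satisfies $\dim\pi^{-1}(x)\geq d-1$; since $\pi$ is birational, $\pi^{-1}(x)$ then contains a $(d-1)$--dimensional (divisorial) component. Any hypersurface $H$ through $x$ therefore has total transform $H_Y=\pi^{-1}(H)\supseteq\pi^{-1}(x)$, so $H_Y$ is reducible (it contains both this divisorial fibre component and the strict transform of $H$), it is not regular, $\pi|_{H_Y}\colon H_Y\to H$ is not a resolution of $H$ (it contracts a $(d-1)$--dimensional component to $x$), and its fibre over $x$ has dimension $d-1$, not $\leq d-2$. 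Consequently neither of the two vanishings your argument rests on --- $R^{d-2}\pi_*\sO_{H_Y}=0$ from the inductive hypothesis, and $R^{d-1}\pi_*\sO_{H_Y}=0$ ``for dimension reasons'' --- is available, and the isomorphism $R^{d-1}\pi_*\sO_Y(-H_Y)\cong R^{d-1}\pi_*\sO_Y$ feeding the Nakayama step is unjustified. (Taking the strict transform of $H$ instead does not help, since then $\sO_Y(-H_Y)\neq\pi^*\sO_X(-H)$ and the projection-formula/Nakayama step breaks.) There is a second, independent problem: the Bertini input you invoke does not produce klt hypersurface sections through a \emph{prescribed} point. For instance, the affine cone over $(\PP^2,\sO(4))$ is klt, but the general hyperplane section through the vertex is the cone over a smooth quartic curve, which is not klt; \cite[Theorem 2.17]{BLP+} concerns general members of base-point-free systems and is used in the paper only to cut on the regular scheme $Y$, not through a fixed singular point of $X$.

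This is why the paper's proof avoids hyperplane sections of $X$ altogether: it writes $\pi$ as a blow-up with $\pi$--ample $\sO_Y(-E)$, kills $R^{d-1}\pi_*\sO_Y(-nE)$ by relative Serre vanishing, reduces to $\sO_Y(-F)$ with $F$ the $\pi$--exceptional part, and then runs a descending induction on the coefficients of $F$. The klt hypothesis enters through the discrepancies $a_i<1$: choosing the component $F_j$ maximising $(n_j-a_j)/r_j$ makes $-(K_Y+\sum_i n_iF_i)|_{F_j}$ relatively big, and Serre duality on the $(d-1)$--dimensional exceptional component $F_j$ (when it maps to a point) gives the required $H^{d-1}$ vanishing. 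If you want to salvage a cutting argument, you would have to cut on $Y$ (as in \autoref{prop:GR in general dim}) rather than through the support point on $X$, but then the klt condition no longer interacts with the cut in the way your induction needs; some mechanism like the paper's discrepancy/bigness argument on the exceptional components seems unavoidable.
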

\begin{rem}
    \begin{itemize}
        \item  Note that we only require $\pi\colon Y\to X$ to be a resolution and not a log resolution. In fact, we only need that $Y$ is normal and factorial. By an example of Linquan Ma (see \cite[Example 3.2]{Ishii-Yoshida}), one can probably not weaken these hypotheses much.
        \item In particular, our exceptional divisors might not be normal a priori (only integral) since we do not assume log resolutions. Although we will take a divisorial notation below which may give the idea that normality is needed, we will really work with $\Q$--line bundles (i.e. elements of $\Pic \otimes_{\Z} \Q$) on the resolution and on the exceptional components.
    \end{itemize}
\end{rem}

\begin{proof}
    By induction on the dimension and by localizing, it is enough to show that $R^{d - 1}\pi_*\cO_Y = 0$. By \cite[Theorem II.7.17]{Har}, there exists a closed subscheme $Z\subseteq X$ such that $\pi$ is the blow--up of $X$ along $Z$. Let us denote $E=\pi^{-1}(Z)$, so that $\sO_{Y}(-E)$ is $\pi$--ample by \stacksprojs{02NS}{02OS}. We can take $n\gg 0$ such that $R^{d-1}\pi_*\sO_Y(-nE)=0$ by relative Serre vanishing. Let us write
    \begin{align*}
        nE=\underbrace{\sum_{i\in I}r_iF_i}_{F}\:+\:G,
    \end{align*}
    for some positive integers $r_i>0$,
    where the $F_i$'s are exactly the $\pi$--exceptional (i.e.~the codimension of the image is at least $2$) components of $E$. Also, observe that $R^{d-1}\pi_*\sO_Y(-F)=0$. Indeed, we have the short exact sequence
    \begin{align*}
        0\to\sO_Y(-nE)\to\sO_Y(-F)\to\sO_G(-F)\to 0.
    \end{align*}
    As $R^{d-1}\pi_*\sO_Y(-nE)=0$, it suffices to show that $R^{d-1}\pi_*\sO_G(-F)=0$. Given that all fibers of $G \to \pi(G)$ have dimension $\leq d - 2$, this is immediate. To conclude the proof, we are then left to show the following:
    \begin{cl}
        If $R^{d-1}\pi_{*}\sO_Y(-\sum_{i\in I} n_iF_i)=0$ for some $(n_i)_{i\in I}\in\Z_{\geq 0}$ satisfying $\sum_{i\in I} n_i\geq 1$, then there exists $j\in I$ such that $n_j\geq 1$ and \[R^{d-1}\pi_{*}\sO_Y\left(-(n_j-1)F_j-\sum_{i\in I\setminus\{j\}} n_iF_i\right)=0.\] 
    \end{cl}
    \noindent \textit{Proof of the claim.}
    For now, fix $j \in I$ with $n_j \geq 1$. We will pick a specific $j$ later. Consider the short exact sequence
    \begin{multline*}
       0\to \sO_Y\left(-\sum_{i\in I} n_iF_i\right)\to  \sO_Y\left(-(n_j-1)F_j-\sum_{i\in I\setminus\{j\}} n_iF_i\right)\\\to \sO_{F_j}\left(F_j-\sum_{i\in I} n_iF_i\right)\to 0. 
    \end{multline*}
     Since, we aim to show that \[R^{d-1}\pi_{*}\sO_Y\left(-(n_j-1)F_j-\sum_{i\in I\setminus\{j\}} n_iF_i\right)=0,\] this is equivalent to proving that \[R^{d-1}\pi_{*}\sO_{F_j}\left(F_j-\sum_{i\in I} n_iF_i\right)=0.\]
    If $\dim \pi(F_j)>0$, then this is immediate since then fibers of $F_j \to \pi(F_j)$ have dimension $\leq d - 2$. If $\dim \pi(F_j)=0$, then
    \begin{align*}
       R^{d-1}\pi_{*}\sO_{F_j}\left(F_j-\sum_{i\in I} n_iF_i\right)&=H^{d-1}\left(F_j, \sO_{F_j}\left(F_j-\sum_{i\in I} n_iF_i\right)\right)\\
    &\cong H^0\left(F_j, \sO_{F_j}\left(K_{F_j}-F_j+\sum_{i\in I} n_iF_i\right)\right)^{\vee} \\
    &\cong H^0\left(F_j, \sO_{F_j}\left(K_{Y}+\sum_{i\in I} n_iF_i\right)\right)^{\vee}.
    \end{align*}
    To conclude that the latter group vanishes, it is then enough to show that $-(K_{Y}+\sum_{i\in I} n_iF_i)|_{F_j}$ is $\pi|_{F_j}$--big.
    
    Let us now find some $j \in I$ that gives this. Since $X$ is of klt type, there exists an effective $\Q$--divisor $\Delta$ such that
    \[
    K_Y+\sum_{i\in I}a_iF_i+\pi_{*}^{-1}\Delta\sim_{\Q}\pi^{*}(K_X+\Delta)
    \]
    for some $a_i\in\Q_{<1}$ (note that $\Supp(F)=\Exc(\pi)$, since $\pi$ is an isomorphism outside of $E$). Let $J\coloneqq \{i\in I \mid n_i-a_i> 0\}\subset I$.
    Note that $J\neq \emptyset$ since $\sum_{i\in I} n_i\geq 1$ and $a_i<1$. Let
    \begin{align*}
        t\coloneqq\underset{i\in J}{\max}\left\{\frac{n_i-a_i}{r_i}\right\}\in\Q_{>0},
    \end{align*}
    and let $j\in J$ be an index where the maximum is attained. We then have
    \[F_j\not\subset \Supp\left(t\left(\sum_{i\in I} r_iF_i\right)-\sum_{i\in J} (n_i-a_i )F_i\right),\]
    so 
    \[
    -\left.\left(\sum_{i\in J} (n_i-a_i )F_i)\right)\right|_{F_j}=\left.\left(-tG-t\sum_{i\in I} r_iF_i\right)\right|_{F_j}+\left.\left(tG+t\sum_{i\in I} r_iF_i-\sum_{i\in J} (n_i-a_i )F_i\right)\right|_{F_j}
    \]
    is $\pi|_{F_j}$--big (recall that $-G-\sum_{i\in I} r_iF_i$ is $\pi$--ample), whence
    \begin{align*}
        -\left.\left(K_{Y}+\sum_{i\in I} n_iF_i\right)\right|_{F_j} & \sim_{\Q,\pi|_{F_j}} -\left.\left(\sum_{i\in I} (n_i-a_i)F_i\right)\right|_{F_j}+\pi^{-1}_{*}\Delta|_{F_j}\\
        &= -\left.\left(\sum_{i\in J} (n_i-a_i )F_i\right)\right|_{F_j}+\left.\left(\sum_{i\in I\setminus J} (a_i-n_i )F_i\right)\right|_{F_j}+\pi^{-1}_{*}\Delta|_{F_j}
    \end{align*}
    is also $\pi|_{F_j}$--big. 
\end{proof}

\begin{lem}\label{lem:dual}
    Let $X$ be a normal variety of dimension $d$, and let $\pi\colon Y\to X$ be a resolution. Suppose that 
    \[ \codim R^i\pi_*\sO_Y > i + 1 \] for all $i \geq 1$. Then $\pi_*\omega_Y = \omega_X$ and there is a natural injection \[ R^1\pi_*\omega_Y \hookrightarrow \sH^{-(d - 1)}(\omega_X^{\bullet}). \] In particular, if $X$ is Cohen--Macaulay, then $R^1\pi_*\omega_Y = 0$.
\end{lem}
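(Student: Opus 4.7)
The main tool is Grothendieck duality for the proper birational morphism $\pi$. Since $Y$ is regular of dimension $d$, one has $\omega_Y^{\bullet} \cong \omega_Y[d]$, so duality gives
\[
R\pi_*\omega_Y[d] \;\cong\; R\pi_*\omega_Y^{\bullet} \;\cong\; R\sHom_{\sO_X}(R\pi_*\sO_Y, \omega_X^{\bullet}).
\]
As $X$ is normal and $\pi$ birational, $\pi_*\sO_Y = \sO_X$; hence the truncation triangle
\[
\sO_X \to R\pi_*\sO_Y \to C \xrightarrow{+1}, \qquad C \coloneqq \tau^{\geq 1}R\pi_*\sO_Y,
\]
dualizes (after rotation) to a distinguished triangle
\[
R\sHom(C, \omega_X^{\bullet}) \to R\pi_*\omega_Y[d] \to \omega_X^{\bullet} \xrightarrow{+1}.
\]

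\textbf{Key vanishing.} I will show that $\cH^{-d}R\sHom(C, \omega_X^{\bullet}) = \cH^{-d+1}R\sHom(C, \omega_X^{\bullet}) = 0$. The cohomology sheaves $\cH^i C = R^i\pi_*\sO_Y$ ($i \geq 1$) have, by hypothesis, support of dimension at most $d - i - 2$. Local duality yields the general fact that a coherent sheaf $\mathcal{F}$ with $\dim \Supp \mathcal{F} \leq e$ satisfies $\sExt^p_{\sO_X}(\mathcal{F}, \omega_X^{\bullet}) = 0$ for $p < -e$. Feeding this into the hyperext spectral sequence
\[
E_2^{p,q} = \sExt^p_{\sO_X}\bigl(\cH^{-q}C, \omega_X^{\bullet}\bigr) \;\Longrightarrow\; \cH^{p+q}R\sHom(C, \omega_X^{\bullet}),
\]
a direct check disposes of every term of total degree $-d$ or $-d+1$: the row $q = 0$ is zero since $\cH^0 C = 0$, and for $q = -i \leq -1$ the inequalities $-d+i < -d+i+2$ and $-d+1+i < -d+i+2$ place the relevant $p$'s inside the $\sExt$--vanishing range.

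\textbf{Extracting the conclusion.} The long exact cohomology sequence of the distinguished triangle now reads
\[
0 \to \pi_*\omega_Y \to \omega_X \to 0 \to R^1\pi_*\omega_Y \to \cH^{-(d-1)}(\omega_X^{\bullet}) \to \cdots,
\]
giving the isomorphism $\pi_*\omega_Y \cong \omega_X$ and the canonical injection $R^1\pi_*\omega_Y \hookrightarrow \cH^{-(d-1)}(\omega_X^{\bullet})$. When $X$ is Cohen--Macaulay, $\omega_X^{\bullet} \cong \omega_X[d]$ is concentrated in degree $-d$, so the right--hand side vanishes and we conclude $R^1\pi_*\omega_Y = 0$.

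\textbf{Expected difficulty.} The reduction via Grothendieck duality and truncation is essentially forced. The only delicate point is the spectral sequence bookkeeping: the bound $\codim > i+1$ is just strong enough---by a single unit---to eliminate all $E_2^{p,q}$ in total degrees $-d$ and $-d+1$, which is why the hypothesis appears in exactly this sharp form and why one obtains control only up to $R^1\pi_*\omega_Y$ (and not further).
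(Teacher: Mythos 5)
Your proof is correct and takes essentially the same route as the paper: apply Grothendieck duality to the truncation triangle $\sO_X \to R\pi_*\sO_Y \to \tau^{\geq 1}R\pi_*\sO_Y$ and kill the cohomology of the dual of the truncated part in degrees $-d$ and $-(d-1)$ using the support bound $\sExt^p_{\sO_X}(\F,\omega_X^{\bullet}) = 0$ for $p < -\dim\Supp\F$. The only (cosmetic) difference is that you organize this via the hyperext spectral sequence, whereas the paper runs a descending induction on the truncations $\tau_{\geq i}R\pi_*\sO_Y$ --- the same computation in a different package.
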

\begin{proof}
    Consider the exact triangle \[ \begin{tikzcd}
    \cO_X \arrow[rr] &  & R\pi_*\cO_Y \arrow[rr] &  & \tau_{\geq 1}R\pi_*\cO_Y \arrow[rr, "+1"] &  & {}
    \end{tikzcd} \] Applying $\bD(-)\coloneqq \mathcal{RH}om(-, \omega_X^{\bullet})$ and Grothendieck duality gives \[ \begin{tikzcd}
    \bD(\tau_{\geq 1}R\pi_*\sO_Y) \arrow[rr] &  & {R\pi_*\omega_Y[d]} \arrow[rr] &  & \omega_X^{\bullet} \arrow[rr, "+1"] &  & {}
    \end{tikzcd} \] so taking cohomology sheaves induces an exact sequence \[ \begin{tikzcd}
        \pi_*\omega_X \arrow[r] & \omega_Y \arrow[r] & \sH^{- (d - 1)}\bD(\tau_{\geq 1}R\pi_*\sO_Y) \arrow[r] & R^1\pi_*\omega_Y \arrow[r] & {\sH^{- (d - 1)}(\omega_X^{\bullet}).}
    \end{tikzcd} \] It is enough to show that \[ \sH^{- (d - 1)}\bD(\tau_{\geq 1}R\pi_*\sO_Y) = 0. \]
    We will show by descending induction on $i \geq 1$ that $\sH^{- (d - 1)}\bD(\tau_{\geq i}R\pi_*\sO_Y) = 0$. For $i \gg 0$, there is nothing to show. Fix $i \geq 1$, and consider the exact triangle 
    \[  \begin{tikzcd}
    { R^if_*\cO_Y[-i]} \arrow[rr] &  & \tau_{\geq i}Rf_*\cO_Y \arrow[rr] &  & \tau_{\geq i + 1}Rf_*\cO_Y \arrow[rr, "+1"] &  & {}
    \end{tikzcd} \] (see \stacksproj{08J5}). Applying $\bD$ gives \[ \begin{tikzcd}
    \bD(\tau_{\geq i + 1}Rf_*\cO_Y) \arrow[rr] &  & \bD(\tau_{\geq i}Rf_*\cO_Y) \arrow[rr] &  & {\bD(R^if_*\cO_Y)[i]} \arrow[rr, "+1"] &  & {}
    \end{tikzcd} \] Since $\cH^{-(d - 1)}\bD(\tau_{\geq i + 1}Rf_*\cO_Y) = 0$ by the induction hypothesis, it is enough to show that 
    $\cH^{-(d - 1)}(\bD(R^if_*\cO_Y)[i]) = 0$ by the long exact sequence in cohomology sheaves. Given that $\dim(\Supp(R^if_*\cO_Y)) \leq d - i - 2$ by assumption, we know by \stacksproj{0A7U} that $\bD(R^if_*\cO_Y)$ is supported in degrees $\geq -(d -i - 2)$. Equivalently, $\bD(R^if_*\cO_Y)[i]$ is supported in degrees $\geq -(d - 2)$, so $\cH^{-(d - 1)}\bD(R^if_*\cO_Y)[i] = 0$. \qedhere
\end{proof}

\begin{proof}[Proof of \autoref{introthm:general dim}]
    The assertions follow from \autoref{prop:GR in general dim}, \autoref{thm:weak rational in general dim} and \autoref{lem:dual}.
\end{proof}

\begin{proof}[Proof of \autoref{introcor:three dim}]
    Let $\pi \colon Y \to X$ be a resolution. Given that $R^i\pi_*\omega_X = 0$ for all $i > 0$ by \autoref{introthm:general dim} and \autoref{prop:GR in general dim} and that $\pi_*\omega_Y = \omega_X$ by \autoref{lem:dual}, we have that $R\pi_*\omega_Y = \omega_X$. We then deduce that $R\pi_*\cO_Y = \cO_X$ by Grothendieck duality and Cohen--Macaulayness of $X$. 
\end{proof}

For the definition of strongly $F$--regular (resp. quasi--$F$--regular, $+$--regular) singularities, we refer the reader to \cite[Definition 3.1]{SS10} (resp.~\cite[Definition 4.1]{Tanaka_Witaszek_Yobouko_quasi_Fe_splittings_and_quasi_F_regularity}, \cite[Definition 6.21]{BLP+}). Note that by definition, a strongly $F$--regular or quasi $F$--regular variety is $F$--finite (i.e. the absolute Frobenius is finite). We say a pair $(X,\Delta)$ is \textit{$+$-regular} if it is $+$-regular at each stalk.

\begin{thm}\label{thm:GR for SFR}
    Let $X$ be a 3--dimensional strongly $F$--regular variety. Then $X$ satisfies Grauert--Riemenschneider vanishing and
    has rational singularities.
\end{thm}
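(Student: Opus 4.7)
The plan is to reduce directly to \autoref{introcor:three dim}. To do so, I need two structural facts about strongly $F$--regular varieties: they are Cohen--Macaulay, and they are of klt type. Once both are established, the three--dimensional corollary applies and gives both Grauert--Riemenschneider vanishing and rational singularities.

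First I would recall that strong $F$--regularity implies Cohen--Macaulayness. This is classical: a strongly $F$--regular local ring is a direct summand of a regular ring, hence in particular Cohen--Macaulay (this goes back to Hochster--Huneke, and is recorded for instance in \cite{SS10}). Since strong $F$--regularity is a local condition, this gives Cohen--Macaulayness of $X$.

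Next I would explain why $X$ is of klt type. This is the Schwede--Smith correspondence between strong $F$--regularity and klt--type singularities: a strongly $F$--regular variety admits, locally around each point, an effective $\Q$--divisor $\Delta$ such that $(X,\Delta)$ is even strongly $F$--regular as a pair, and in particular klt (see \cite[Theorem 4.3]{SS10} and the references there; one uses that for any sufficiently general effective divisor one can arrange $(p^e-1)(K_X+\Delta)$ to be Cartier and that the splittings extend). A minor technical point here is to promote this local existence of $\Delta$ to the global definition of ``klt type'' used in this paper; since the definition in \autoref{thm:weak rational in general dim} and \autoref{introcor:three dim} is used only through the stalkwise klt inequalities $a_i<1$ in the discrepancy computation, working locally is harmless (and by a standard gluing/compactness argument one can even produce a global $\Delta$).

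With these two ingredients in hand, $X$ is a three--dimensional Cohen--Macaulay variety of klt type, and \autoref{introcor:three dim} applies directly: $X$ satisfies Grauert--Riemenschneider vanishing and has rational singularities. The main obstacle I expect is simply citing the Schwede--Smith result at the appropriate level of generality (in particular making sure it is available in the excellent Noetherian setting of this paper, not only over an $F$--finite field), since the Cohen--Macaulayness step is completely standard and the application of \autoref{introcor:three dim} is immediate.
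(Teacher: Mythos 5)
Your route is essentially the paper's: establish that a strongly $F$--regular variety is Cohen--Macaulay and of klt type (the latter via the Schwede--Smith/Hara--Watanabe correspondence, \cite{SS10} and \cite{Hara-Watanabe}), then apply \autoref{introcor:three dim}. One small correction to your justification of Cohen--Macaulayness: it is not true (nor known) that a strongly $F$--regular local ring is a direct summand of a regular ring --- the implication goes the other way (direct summands of regular rings are strongly $F$--regular); the correct reason is Hochster--Huneke's tight--closure result that (weakly) $F$--regular rings are Cohen--Macaulay, combined (as in the paper, via Gabber) with the fact that $F$--finite schemes admit dualizing complexes, which also settles your worry about the excellent Noetherian setting since strong $F$--regularity here presupposes $F$--finiteness.
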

\begin{proof}
    We know by \cite[Corollary 2.5]{Hochster-Huneke89} and \cite[Remark 13.6]{Gab04} that $X$ is Cohen--Macaulay. Combining \cite[Corollary 6.9]{SS10} and \cite[Theorem 3.3]{Hara-Watanabe}, we deduce that $X$ is of klt type, so the proof is complete by \autoref{introcor:three dim}.
\end{proof}

\begin{thm}\label{thm:vanishing_+_regular}
Let $(X, \Delta)$ be a 3-dimensional $+$--regular pair such that $K_X + \Delta$ is $\Q$--Cartier. Then $X$ satisfies Grauert--Riemenschneider vanishing and has rational singularities.

The same statement holds if $X$ quasi--$F$--regular, and $\Delta = 0$.


\end{thm}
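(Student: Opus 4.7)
The plan is to reduce both assertions to Corollary \ref{introcor:three dim} by verifying its hypotheses in each setting: namely that $X$ is Cohen--Macaulay and of klt type. Once these are in place, the vanishing $R^i\pi_*\omega_Y = 0$ for all $i \geq 1$ and the isomorphism $\cO_X \xrightarrow{\sim} R\pi_*\cO_Y$ are immediate from the corollary.

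For the $+$--regular case, I would first observe that $+$--regularity of $(X, \Delta)$ together with the $\Q$--Cartier assumption on $K_X + \Delta$ forces $(X, \Delta)$ to be klt. This is essentially built into the definition in \cite[Definition 6.21]{BLP+}: splitting the relevant maps on a sufficiently high alteration or log resolution gives strict inequalities on discrepancies, so $X$ is of klt type with boundary $\Delta$. The remaining task is to establish Cohen--Macaulayness. This relies on the deep fact that $+$--regular local rings are Cohen--Macaulay, which in turn rests on Bhatt's theorem on the Cohen--Macaulayness of the absolute integral closure $R^+$ (in mixed characteristic after $p$--adic completion, and via direct summand arguments in equal characteristic); this input is packaged within the references of \cite{BLP+}. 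With both properties verified, Corollary \ref{introcor:three dim} applies directly.

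For the quasi--$F$--regular case with $\Delta = 0$, I would argue along the same lines as in the proof of Theorem \ref{thm:GR for SFR}, substituting the appropriate references from \cite{Tanaka_Witaszek_Yobouko_quasi_Fe_splittings_and_quasi_F_regularity}: Cohen--Macaulayness of quasi--$F$--regular varieties is recorded there, and the existence of an effective $\Q$--divisor $\Delta$ making $(X, \Delta)$ klt follows by combining the quasi--$F$--regularity of $X$ with a characteristic--$p$ analogue of \cite[Theorem 3.3]{Hara-Watanabe} that produces a compatible boundary (again using $F$--finiteness of $X$). Then $X$ is of klt type, and Corollary \ref{introcor:three dim} concludes.

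The main obstacle is locating a clean black--box statement for Cohen--Macaulayness in the $+$--regular setting: unlike the strongly $F$--regular case, where Hochster--Huneke's classical result gives CM cleanly, here one has to trace through the route via Bhatt's results on $R^+$, and one must check that the formulation of $+$--regularity used in \cite[Definition 6.21]{BLP+} is compatible with the hypotheses of those CM theorems. Once that is pinned down, the rest of the argument is a formal deduction from Corollary \ref{introcor:three dim}.
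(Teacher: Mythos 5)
Your proposal is correct and follows essentially the same route as the paper: verify that $X$ is Cohen--Macaulay and of klt type in each setting and then invoke \autoref{introcor:three dim}. The paper simply cites \cite[Proposition 6.10]{BLP+} for the $+$--regular case and \cite[Theorems 5.8 and 8.9]{KTTWYY3} for the quasi--$F$--regular case as black boxes giving both properties at once, rather than tracing through the Cohen--Macaulayness and boundary-construction arguments as you sketch.
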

\begin{proof}
    By \cite[Proposition 6.10]{BLP+} (resp. \cite[Theorems 5.8 and 8.9]{KTTWYY3}), we know that the pair $(X, \Delta)$ is klt and that $X$ is Cohen--Macaulay. We then obtain the result by \autoref{introcor:three dim}.
\end{proof}

\subsection{$\mathbb{Q}_p$--rationality}
Throughout, fix a variety $X$ of finite type over a perfect field of positive characteristic. For $n \geq 1$, we let $W_n\cO_X$ denote the sheaf of $p$--typical Witt vectors, with its induced Verschiebung, restrictions and Frobenius maps (see e.g. \cite[Section 2.2]{KTTWYY1}). The complex $W_n\omega_X^{\bullet}$ denotes the canonical dualizing complex on the scheme $W_nX$ given by the locally ringed space $(X, W_n\cO_X)$, and $W_n\omega_X$ denotes its smallest non--zero cohomology sheaf (see \cite[Section 2.2]{Baudin_Witt_GR_vanishing_and_applications}).

\begin{rem}
    Recall that as a set, $W_n\cO_X$ simply consists of $n$--uples in $\cO_X$. The Verschiebung $V \colon F_*W_n\cO_X \to W_{n + 1}\cO_X$ sends $(s_1, \dots, s_n)$ to $(0, s_1, \dots, s_n)$, while the restriction $R \colon W_{n + 1}\cO_X \to W_n\cO_X$ sends $(s_1, \dots, s_{n + 1})$ to $(s_1, \dots, s_n)$. In particular, there is a natural short exact sequence \[ \begin{tikzcd}
        0 \arrow[rr] &  & F_*W_{n}\cO_X \arrow[rr, "V"] &  & W_{n+1}\cO_X \arrow[rr, "R^n"] &  & \cO_X \arrow[rr] &  & 0
    \end{tikzcd} \]
    of $W_{n+1}\sO_X$-modules.
\end{rem}

\begin{lem}\label{klt_gives_weak_Witt_psrat}
    Let $X$ be a klt type variety of finite type over a perfect field, and let $\pi \colon Y \to X$ denote a resolution. Then for all $n \geq 1$, we have that $\pi_*W_n\omega_Y = W_n\omega_X$.
\end{lem}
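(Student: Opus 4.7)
The plan is to induct on $n$. The base case $n=1$ reduces to $\pi_*\omega_Y=\omega_X$, which follows from \autoref{lem:dual} together with \autoref{thm:weak rational in general dim}.

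For the induction step, I would dualize the short exact sequence
\[ 0 \to F_*W_n\cO \xrightarrow{V} W_{n+1}\cO \xrightarrow{R^n} \cO \to 0 \]
on both $W_{n+1}Y$ and $W_{n+1}X$ using the dualizing complexes $W_{n+1}\omega_Y^\bullet$ and $W_{n+1}\omega_X^\bullet$. Grothendieck duality for the closed immersion of $X$ (resp.\ $Y$) into $W_{n+1}X$ (resp.\ $W_{n+1}Y$), together with the analogous fact for $W_nX \hookrightarrow W_{n+1}X$, produces exact triangles
\[ \omega_Y^\bullet \to W_{n+1}\omega_Y^\bullet \to F_*W_n\omega_Y^\bullet \xrightarrow{+1}, \qquad \omega_X^\bullet \to W_{n+1}\omega_X^\bullet \to F_*W_n\omega_X^\bullet \xrightarrow{+1}. \]
Since $W_kY$ is Cohen--Macaulay for all $k$ (as $Y$ is regular), the $Y$--triangle yields an honest short exact sequence
\[ 0 \to \omega_Y \to W_{n+1}\omega_Y \to F_*W_n\omega_Y \to 0. \]
Since $X$ is not assumed Cohen--Macaulay, taking $\cH^{-d}$ of the $X$--triangle only produces a four--term exact sequence
\[ 0 \to \omega_X \to W_{n+1}\omega_X \to F_*W_n\omega_X \to \cH^{-(d-1)}(\omega_X^\bullet), \]
where the left $0$ uses that $W_n\omega_X^\bullet$ is a normalized dualizing complex on the $d$--dimensional scheme $W_nX$, hence supported in degrees $\geq -d$.

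By naturality of the Grothendieck trace, applied to the proper morphisms $\pi\colon W_kY \to W_kX$, these sequences assemble into a commutative ladder
\[
\begin{tikzcd}[column sep = small]
0 \arrow[r] & \pi_*\omega_Y \arrow[r] \arrow[d, "\cong"'] & \pi_*W_{n+1}\omega_Y \arrow[r] \arrow[d] & \pi_*F_*W_n\omega_Y \arrow[r] \arrow[d, "\cong"'] & R^1\pi_*\omega_Y \arrow[d, hook] \\
0 \arrow[r] & \omega_X \arrow[r] & W_{n+1}\omega_X \arrow[r] & F_*W_n\omega_X \arrow[r] & \cH^{-(d-1)}(\omega_X^\bullet).
\end{tikzcd}
\]
The leftmost vertical is an isomorphism by the base case, the third by the induction hypothesis (since $F_*$ and $\pi_*$ commute), and the rightmost is precisely the injection produced in \autoref{lem:dual}: both maps are $\cH^{-(d-1)}$ applied to the trace $R\pi_*\omega_Y^\bullet \to \omega_X^\bullet$, which under Grothendieck duality is dual to the unit $\cO_X \to R\pi_*\cO_Y$ used in \autoref{lem:dual}. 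A standard diagram chase of four--term exact sequences then forces the middle vertical map $\pi_*W_{n+1}\omega_Y \to W_{n+1}\omega_X$ to be an isomorphism, closing the induction.

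The main obstacle is verifying the commutativity of the ladder. This is a functoriality statement for Grothendieck duality: the Witt short exact sequences on $W_{n+1}Y$ and $W_{n+1}X$ are pulled back from one another along $\pi\colon W_{n+1}Y \to W_{n+1}X$, and applying $R\sHom$ together with the duality isomorphisms $R\pi_*W_k\omega_Y^\bullet \cong R\sHom_{W_k\cO_X}(R\pi_*W_k\cO_Y, W_k\omega_X^\bullet)$ converts this compatibility into the desired commutative squares. Once this compatibility is in place, the identification of the rightmost vertical arrow with the injection of \autoref{lem:dual} is automatic.
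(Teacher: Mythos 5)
Your proposal is correct and follows essentially the same route as the paper: induction on $n$, dualizing the Witt vector short exact sequence, and a four--term diagram chase using the isomorphisms $\pi_*\omega_Y \cong \omega_X$, $\pi_*W_n\omega_Y \cong W_n\omega_X$ and the injection $R^1\pi_*\omega_Y \hookrightarrow \cH^{-(d-1)}(\omega_X^{\bullet})$ from \autoref{lem:dual}. The only (cosmetic) difference is that the paper obtains the commutative ladder in one step by applying $\mathcal{RH}om_{W_{n+1}\cO_X}(-, W_{n+1}\omega_X^{\bullet})$ directly to the map of exact triangles $W_{n+1}\cO_X\text{--sequence} \to R\pi_*(W_{n+1}\cO_Y\text{--sequence})$, which packages the trace--naturality you single out as the main point to verify (and, as your own triangle on $Y$ shows, Cohen--Macaulayness of $Y$ alone suffices; the stronger fact that $W_kY$ is Cohen--Macaulay is not needed).
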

\begin{proof}
    Let us prove the result by induction on $n \geq 1$. The case $n = 1$ is contained in \autoref{lem:dual}. In general, applying Grothendieck duality (i.e. the functor $\mathcal{RH}om_{W_{n+1}\cO_X}(-, W_{n+1}\omega_X^{\bullet})$) to the diagram
    \[ \begin{tikzcd}
        F_*W_n\cO_X \arrow[rr, "V"] \arrow[d] &  & W_{n + 1}\cO_X \arrow[rr, "R^n"] \arrow[d] &  & \cO_X \arrow[rr, "+1"] \arrow[d] &  & {} \\
        F_*R\pi_*W_n\cO_Y \arrow[rr, "V"]     &  & R\pi_*W_{n+1}\cO_Y \arrow[rr, "R^n"]     &  & R\pi_*\cO_Y \arrow[rr, "+1"]     &  & {}
    \end{tikzcd} \] and taking the long exact sequence in cohomology gives an exact diagram with exact rows
    \[ \begin{tikzcd}
        0 \arrow[r] & \pi_*\omega_Y \arrow[d, "\cong"'] \arrow[r] & \pi_*W_{n+1}\omega_Y \arrow[d] \arrow[r] & F_*W_n\omega_Y \arrow[d, "\cong"] \arrow[r] & R^1\pi_*\omega_Y \arrow[d, hook] \arrow[r]            & \dots \\
        0 \arrow[r] & \omega_X \arrow[r]                          & W_{n + 1}\omega_X \arrow[r]                & F_*W_n\omega_X \arrow[r]                    & \mathcal{H}^{- (d - 1)}(\omega_X^{\bullet}) \arrow[r] & \dots
    \end{tikzcd} \] (note that $R^1\pi_*\omega_Y \to \cH^{-(d - 1)}(\omega_X^{\bullet})$ is injective by \autoref{lem:dual} and \autoref{introthm:general dim}). A diagram--chasing argument then concludes that $\pi_*W_{n +1}\omega_Y \to W_{n+1}\omega_X$ is an isomorphism.
\end{proof}

\begin{proof}[Proof of \autoref{intro:cor_Qp_rationality}]
    This statement and the statement in \autoref{rem_below} follows immediately from \autoref{klt_gives_weak_Witt_psrat} and \cite[Theorem 5.1.4]{Baudin_Witt_GR_vanishing_and_applications} (see also Theorem A in \emph{loc. cit.} to obtain the statements when we assume projectivity and isolated singularities).
\end{proof}

\section*{Acknowledgements}
We would like to thank Fabio Bernasconi and Shou Yoshikawa for useful conversations related to the content of this article.
TK was supported by the JSPS KAKENHI grant number JP24K16897. JB and LR were supported by the ERC starting grant \#804334. 

\bibliography{hoge.bib}
\bibliographystyle{alpha}

\bigskip

\end{document}